\newtheorem{theorem}{Theorem}[section]
\newtheorem{lemma}[theorem]{Lemma}
\newtheorem{corollary}[theorem]{Corollary}
\newtheorem{conjecture}[theorem]{Conjecture}
\newtheorem{claim}{Claim}
\begin{document}

\onehalfspace

\title{Partial immunization of trees}

\author{Mitre C. Dourado$^1$ 
\and Stefan Ehard$^2$
\and Lucia D. Penso$^2$
\and Dieter Rautenbach$^2$}

\date{}

\maketitle

\begin{center}
$^1$ Instituto de Matem\'{a}tica\\
Universidade Federal do Rio de Janeiro, Rio de Janeiro, Brazil,
\texttt{mitre@dcc.ufrj.br}\\[3mm]
$^2$ Institut f\"{u}r Optimierung und Operations Research, 
Universit\"{a}t Ulm, Ulm, Germany,
\{\texttt{stefan.ehard,lucia.penso,dieter.rautenbach}\}\texttt{@uni-ulm.de}\\[3mm]
\end{center}

\begin{abstract}
For a graph $G$ 
and an integer-valued function $\tau$ on its vertex set,
a dynamic monopoly is a set of vertices of $G$
such that iteratively adding to it vertices $u$ of $G$ 
that have at least $\tau(u)$ neighbors in it
eventually yields the vertex set of $G$.
We study the problem of maximizing the minimum order of a dynamic monopoly
by increasing the threshold values of individual vertices
subject to vertex-dependent lower and upper bounds,
and fixing the total increase.
We solve this problem efficiently for trees,
which extends a result of Khoshkhah and Zaker
(On the largest dynamic monopolies of graphs with a given average threshold, 
Canadian Mathematical Bulletin 58 (2015) 306-316).
\end{abstract}

{\small 
\begin{tabular}{lp{13cm}}
{\bf Keywords:} Dynamic monopoly; vaccination
\end{tabular}
}


\section{Introduction}

As a simple model for an infection process within a network \cite{keklta,drro,dori}
one can consider a graph $G$
in which each vertex $u$ is assigned a non-negative integral threshold value $\tau(u)$ 
quantifying how many infected neighbors of $u$ 
are required to spread the infection to $u$.
In this setting, 
a dynamic monopoly of $(G,\tau)$ is a set $D$ of vertices 
such that an infection starting in $D$ 
spreads to all of $G$,
and the smallest order ${\rm dyn}(G,\tau)$ of such a dynamic monopoly 
measures the vulnerability of $G$ for the given threshold values.

Khoshkhah and Zaker \cite{khza} 
consider the maximum of ${\rm dyn}(G,\tau)$
over all choices for the function $\tau$
such that the average threshold is at most 
some positive real $\bar{\tau}$.
They show that this maximum equals
\begin{eqnarray}\label{ez}
\max\left\{ k:\sum\limits_{i=1}^k(d_G(u_i)+1)\leq n(G)\bar{\tau}\right\},
\end{eqnarray}
where $u_1,\ldots,u_{n(G)}$ is a linear ordering of the vertices of $G$ 
with non-decreasing vertex degrees $d_G(u_1)\leq \ldots \leq d_G(u_{n(G)})$.
To obtain this simple formula
one has to allow $d_G(u)+1$ as a threshold value for vertices $u$,
a value that makes these vertices completely immune to the infection,
and forces every dynamic monopoly to contain them.
Requiring $\tau(u)\leq d_G(u)$ for every vertex $u$ of $G$ leads to a harder problem;
Khoshkhah and Zaker \cite{khza} show hardness for planar graphs
and describe an efficient algorithm for trees.
In the present paper we consider their problem 
with additional vertex-dependent lower and upper bounds 
on the threshold values. 
As our main result, we describe an efficient algorithm for trees 
based on a completely different approach than the one in \cite{khza}.

In order to phrase the problem and our results exactly,
and to discuss further related work, 
we introduce some terminology.
Let $G$ be a finite, simple, and undirected graph.
A {\it threshold function} for $G$ is a function 
from the vertex set $V(G)$ of $G$ to the set of integers.
For notational simplicity, we allow negative threshold values.
Let $\tau\in \mathbb{Z}^{V(G)}$ be a threshold function for $G$.
For a set $D$ of vertices of $G$,
the {\it hull $H_{(G,\tau)}(D)$ of $D$ in $(G,\tau)$}
is the smallest set $H$ of vertices of $G$ such that 
$D\subseteq H$, and 
$u\in H$ for every vertex $u$ of $G$ with $|H\cap N_G(u)|\geq \tau (u)$.
Clearly, the set $H_{(G,\tau)}(D)$ is obtained 
by starting with $D$, and 
iteratively adding vertices $u$ 
that have at least $\tau(u)$ neighbors in the current set
as long as possible.
With this notation, 
the set $D$ is a {\it dynamic monopoly of $(G,\tau)$}
if $H_{(G,\tau)}(D)$ equals the vertex set of $G$,
and ${\rm dyn}(G,\tau)$ is the minimum order of such a set.
A dynamic monopoly of $(G,\tau)$ of order ${\rm dyn}(G,\tau)$ is {\it minimum}.
The parameter ${\rm dyn}(G,\tau)$ is computationally hard \cite{ch,cedoperasz};
next to general bounds \cite{acbewo,gera,chly} 
efficient algorithms are only known for essentially tree-structured instances  
\cite{cicogamipeva,ch,cedoperasz,chhuliwuye,behelone}.

We can now phrase the problem we consider:
For a given graph $G$,
two functions $\tau,\iota_{\max}\in \mathbb{Z}^{V(G)}$, 
and a non-negative integer {\it budget} $b$, let 
${\rm vacc}(G,\tau,\iota_{\max},b)$ be defined as
\begin{eqnarray}
\max\Big\{ {\rm dyn}(G,\tau+\iota):
\iota\in \mathbb{Z}^{V(G)}, 
0\leq \iota\leq \iota_{\max},\mbox{ and }\iota(V(G))=b\Big\},\label{ed1}
\end{eqnarray}
where 
inequalities between functions are meant pointwise,
and $\iota(V(G))=\sum\limits_{u\in V(G)}\iota(u)$.
The function $\iota$ is the {\it increment} 
of the original threshold function $\tau$.
The final threshold function $\tau+\iota$ must lie between $\tau$ and $\tau+\iota_{\max}$,
which allows to incorporate vertex-dependent lower and upper bounds.
Note that no such increment $\iota$ exists if $\iota_{\max}(V(G))$ is strictly less than $b$, 
in which case ${\rm vacc}(G,\tau,\iota_{\max},b)$ 
equals $\max\emptyset=-\infty$.
Note that we require $\iota(V(G))=b$ in (\ref{ed1}),
which determines the average final threshold as 
$(\tau(V(G))+b)/n(G)$.
Since 
${\rm dyn}(G,\rho)\leq {\rm dyn}(G,\rho')$
for every two threshold functions $\rho$ and $\rho'$ for $G$ 
with $\rho\leq \rho'$,
for $\iota_{\max}(V(G))\geq b$,
the value in (\ref{ed1}) remains the same when replacing
`$\iota(V(G))=b$' with `$\iota(V(G))\leq b$'
provided that $b\leq \iota_{\max}(V(G))$.

The results of Khoshkhah and Zaker \cite{khza} 
mentioned above can be phrased by saying  
\begin{enumerate}[(i)]
\item that ${\rm vacc}(G,0,d_G+1, n(G)\bar{\tau})$ equals (\ref{ez})
whenever $n(G)\bar{\tau}$ is a non-negative integer at most $\sum\limits_{u\in V(G)}(d_G(u)+1)=2m(G)+n(G)$,
where $m(G)$ is the size of $G$, and
\item that ${\rm vacc}(T,0,d_T,b)$ can be determined 
efficiently whenever $T$ is a tree.
\end{enumerate}
Our main result is the following.

\begin{theorem}\label{theorem1}
For a given tuple $(T,\tau,\iota_{\max},b)$,
where $T$ is a tree of order $n$,
$\tau,\iota_{\max}\in\mathbb{Z}^{V(G)}$,
and $b$ is an integer with $0\leq b\leq \iota_{\max}(V(T))$,
the value ${\rm vacc}(T,\tau,\iota_{\max},b)$
as well as an increment $\iota\in\mathbb{Z}^{V(G)}$ 
with $0\leq \iota\leq \iota_{\max}$ and $\iota(V(G))=b$
such that ${\rm vacc}(T,\tau,\iota_{\max},b)={\rm dyn}\left(T,\tau+\iota\right)$
can be determined in time $O\left(n^2(b+1)^2\right)$.
\end{theorem}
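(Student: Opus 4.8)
The plan is to root $T$ at an arbitrary vertex $r$ and to run a bottom‑up dynamic program over the rooted subtrees $T_v$, exploiting the familiar fact that on a tree the structure of dynamic monopolies is captured, at each vertex, by a constant number of \emph{boundary states}: for a final threshold $\rho=\tau+\iota$, a vertex $v$ either is put into the monopoly, or gets infected using only vertices of $T_v$, or gets infected only after its parent does. For a partial increment $\iota$ defined on $T_v$, let $d^{*}_v(\iota)$ be the minimum number of vertices of $T_v$ one must put into the monopoly so that all of $T_v$ becomes infected, where $v$ is allowed to rely on its parent becoming infected, and let $d^{\mathrm{a}}_v(\iota)$ be the same minimum under the stronger requirement that $v$ must become infected using only $T_v$ (i.e.\ $v$ ends up \emph{available} to its parent). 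The first step records the standard recursion: each child $c$ of $v$ is available or not, and $v$ is either in the monopoly, or made available by at least $\rho(v)$ available children, or infected after its parent using exactly $\rho(v)-1$ available children; for a fixed $\iota$ this decomposes $d^{*}_v$ and $d^{\mathrm{a}}_v$ over the children of $v$, the only interaction being through the \emph{number} of children that turn out to be available.

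The main obstacle is that ${\rm vacc}$ is a maximum (over $\iota$) of a minimum (over dynamic monopolies), and along a tree these do not obviously commute: storing, per subtree and per budget, only the maximum over $\iota$ of $d^{*}_v$ and of $d^{\mathrm{a}}_v$ would over‑count, since a single increment cannot make several different branches of the inner minimum large at once. What makes everything decompose is the essentially trivial inequality
\[
d^{\mathrm{a}}_v(\iota)\ \le\ d^{*}_v(\iota)+1\qquad\text{for every }\iota,
\]
because, starting from an optimal configuration for $d^{*}_v(\iota)$ in which $v$ is not yet available, one may simply add $v$ to the monopoly: then $v$ is available and all of $T_v$ still becomes infected, at a cost of one extra vertex. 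Hence $d^{\mathrm{a}}_v(\iota)-d^{*}_v(\iota)\in\{0,1\}$, so each child is either \emph{freely available} at no extra cost or costs exactly one more vertex to make available. Writing $\rho(v)=\tau(v)+\iota(v)$ and $Z(\iota)$ for the number of freely available children of $v$, the inner minimum now collapses to closed form; for example the minimum monopoly part inside $T_v$ making $v$ available equals $\sum_{c}d^{*}_c(\iota)+\min\{1,\max\{0,\rho(v)-Z(\iota)\}\}$ — seed $v$, or make up the shortfall of available children one child at a time — with analogous expressions for the other two boundary states. Consequently ${\rm dyn}(T,\tau+\iota)$, and therefore ${\rm vacc}(T,\tau,\iota_{\max},b)$, is a \emph{pure} maximization over $\iota$.

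The algorithm then processes $V(T)$ in post‑order and stores, for every vertex $v$ and every budget $j\in\{0,\dots,b\}$, two numbers: the maximum of $d^{*}_v(\iota)$ over all $\iota$ on $T_v$ with $\iota(V(T_v))=j$ under which $v$ is freely available, and the same maximum under which $v$ is not freely available. To obtain these at $v$, run a secondary ``tree knapsack'' over the children of $v$ whose state is the pair (budget distributed so far, number of children declared freely available), where each child offers, for each budget allocated to it, its two stored values; then choose the increment $\iota(v)\in[0,\iota_{\max}(v)]$ at $v$ itself, which fixes $\rho(v)$ and the required number of available children, and read off the two numbers at $v$ from the closed‑form formulas and the inequality above (using the convention $+\infty$ for boundary states that are infeasible, e.g.\ when $\rho(v)$ exceeds the number of children). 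At the root $r$, where no parent exists, ${\rm vacc}(T,\tau,\iota_{\max},b)$ is obtained by the same combination with total budget $b$; a corresponding optimal increment $\iota$ is recovered by back‑tracking through the tables.

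For the running time, the knapsack at $v$ keeps a table of size $O\big((b+1)\,d_T(v)\big)$ and inserting each child costs $O(b+1)$ per cell, hence $O\big((b+1)^2\,d_T(v)^2\big)$ time at $v$; the subsequent choice of $\iota(v)$ contributes only a lower‑order $O\big((b+1)^2\,d_T(v)\big)$ term. Summing over all vertices and using $\sum_{v}d_T(v)^2\le\bigl(\max_{v}d_T(v)\bigr)\sum_{v}d_T(v)=O(n^2)$ gives the claimed $O\big(n^2(b+1)^2\big)$ bound. The part that will need the most care is the correctness argument, namely that the two numbers stored per pair $(v,j)$ really capture all the information about $T_v$ that the rest of $T$ needs — which is exactly what the inequality $d^{\mathrm{a}}_v\le d^{*}_v+1$ buys us — together with the routine but fiddly feasibility bookkeeping ($+\infty$ entries and the constraints $0\le\iota\le\iota_{\max}$, $\iota(V(T))=b$).
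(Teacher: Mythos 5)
Your proposal is correct, and it shares the overall architecture of the paper's proof: root the tree, do a budgeted bottom-up dynamic program with two boundary states per vertex (infected with versus without help from the parent, i.e.\ your $d^{*}_v$ and $d^{\mathrm{a}}_v$ are the paper's ${\rm dyn}(T_v,(\tau+\iota)^{\to v})$ and ${\rm dyn}(T_v,\tau+\iota)$), run a knapsack over the children whose state is (budget used, number of children available at no extra cost), and bound the time by $\sum_v d_T(v)^2(b+1)^2=O(n^2(b+1)^2)$. Where you genuinely diverge is in how the max--min interaction is tamed. The paper stores, per vertex and budget, only the two unconstrained maxima $x_0(u,b)=\max_\iota d^{\mathrm{a}}_u(\iota)$ and $x_1(u,b)=\max_\iota d^{*}_u(\iota)$, and its key lemma (Lemma \ref{lemma2}, equation (\ref{e3}), yielding Corollary \ref{corollary1}) is that a single increment attains both maxima; this is what legitimizes using the test $x_0(v_i,b_i)=x_1(v_i,b_i)$ to decide which children are ``free'' in the recursion, and its proof is the delicate part. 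You instead store the maxima of $d^{*}_v$ separately over the two classes of increments (root freely available or not), so that in the child knapsack each child is combined with an increment whose availability status matches its declared class; this makes the exchange of the outer maximum with the inner minimum valid by construction and renders the common-maximizer lemma unnecessary, at the price of having to prove the exact fixed-increment decomposition $d^{\mathrm{a}}_v(\iota)=\sum_c d^{*}_c(\iota)+\min\{1,\max\{0,\rho(v)-Z(\iota)\}\}$ (and its analogue for $d^{*}_v$), which is indeed correct and is essentially what the paper establishes inside Claims \ref{claim1} and \ref{claim3}. Two small points to fix when writing this up: empty classes in your tables must be assigned $-\infty$ (not $+\infty$), since the outer problem is a maximization, and your example of an ``infeasible'' state is not actually infeasible ($v$ can always be seeded, so $d^{\mathrm{a}}_v(\iota)$ is always finite; only the classified maxima can be over empty sets); also, the correctness argument you defer --- that the two classified numbers per pair $(v,j)$ are all the parent needs --- should be spelled out, but it follows directly from your decomposition formula together with the observation that the availability status of $u$ itself under a combined increment is determined by $\rho(u)$ and the number of declared-available children.
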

While our approach relies on dynamic programming,
Khoshkhah and Zaker show (ii) using the following result
in combination with a minimum cost flow algorithm.

\begin{theorem}[Khoshkhah and Zaker \cite{khza}]\label{theoremkhza}
For a given tree $T$, and a given integer $b$ with $0\leq b\leq 2m(T)$,
there is a matching $M$ of $T$ such that 
${\rm vacc}(T,0,d_T,b)={\rm dyn}(G,\tau_M)$
and
$\tau_M(V(T))\leq b$,
where
$$
\tau_M:V(T)\to\mathbb{Z}:u\mapsto 
\begin{cases}
d_T(u) &,\mbox{ $u$ is incident with a vertex in $M$, and}\\
0 &,\mbox{ otherwise.}
\end{cases}
$$
\end{theorem}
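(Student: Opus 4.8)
I would first translate the statement into the language of transversals. Call a set $R\subseteq V(T)$ \emph{$(T,\tau)$-closed} if $|N_T(u)\setminus R|\le\tau(u)-1$ for every $u\in R$. Since $H_{(T,\tau)}(D)$ is the smallest hull-closed set containing $D$, its complement $V(T)\setminus H_{(T,\tau)}(D)$ is the largest $(T,\tau)$-closed set disjoint from $D$; hence $D$ is a dynamic monopoly of $(T,\tau)$ if and only if $D$ meets every nonempty $(T,\tau)$-closed set, equivalently every inclusionwise minimal one. Writing $\mathcal{C}(T,\tau)$ for the family of these minimal nonempty $(T,\tau)$-closed sets, this says that ${\rm dyn}(T,\tau)$ is the minimum size of a transversal of $\mathcal{C}(T,\tau)$. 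Two easy observations will be used throughout. First, a connected component of a $(T,\tau)$-closed set is again $(T,\tau)$-closed (in a tree, every neighbour in $R$ of a vertex of $R$ lies in its own component of $T[R]$), so every member of $\mathcal{C}(T,\tau)$ induces a subtree of $T$. Second, if $\tau\le d_T$ then no singleton is $(T,\tau)$-closed, so $|R|\ge 2$ for every $R\in\mathcal{C}(T,\tau)$.

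The key ingredient, and the step I expect to be the main obstacle, is the min--max identity
\[
{\rm dyn}(T,\tau)=\nu\big(\mathcal{C}(T,\tau)\big)\quad\text{for every }\tau\text{ with }\tau\le d_T,
\]
where $\nu(\mathcal{C})$ denotes the maximum number of pairwise disjoint members of $\mathcal{C}$; only the inequality ``$\le$'' is nontrivial. I would prove it by induction on $|V(T)|$, stated for forests (the base case being trivial). Pick a leaf $\ell$ of $T$ with neighbour $w$, and note $\tau(\ell)\le d_T(\ell)=1$. If $\tau(\ell)\le 0$, then $\ell$ lies in no $(T,\tau)$-closed set and the $(T,\tau)$-closed sets are exactly the $(T-\ell,\tau')$-closed sets, where $\tau'$ agrees with $\tau$ except that $\tau'(w)=\tau(w)-1$. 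If $\tau(\ell)=1=d_T(w)$, then $\{\ell,w\}\in\mathcal{C}(T,\tau)$, some minimum transversal contains $w$ (replace $\ell$ by $w$ if necessary), and deleting $\ell$ and $w$ and decreasing $\tau$ by $1$ on the remaining neighbours of $w$ reduces both sides by exactly $1$. If $\tau(\ell)=1<d_T(w)$, then deleting $\ell$ \emph{without} altering $\tau(w)$ works: one checks that a set $\hat R\subseteq V(T)\setminus\{\ell\}$ is $(T-\ell,\tau)$-closed if and only if $\hat R$ or $\hat R\cup\{\ell\}$ is $(T,\tau)$-closed, and this correspondence preserves both the transversal number and the packing number. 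In each case the claim follows from the induction hypothesis applied to a smaller forest. (Alternatively, the identity can be derived from the classical fact that the subsets of $V(T)$ inducing subtrees of $T$ form a normal hypergraph.)

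Next I would analyse $\tau_M$ directly. If $u\notin V(M)$ then $\tau_M(u)=0$, so $|N_T(u)\setminus R|\le-1$ is impossible and $u$ lies in no $(T,\tau_M)$-closed set; if $u\in V(M)$ then $\tau_M(u)=d_T(u)$, so $u$ lies in a $(T,\tau_M)$-closed set $R$ only if it has a neighbour in $R$. Hence $\mathcal{C}(T,\tau_M)$ is exactly the edge set of the forest $T[V(M)]$, so ${\rm dyn}(T,\tau_M)$ is the minimum vertex cover number of $T[V(M)]$, which by K\H{o}nig's theorem equals its maximum matching number, which equals $|M|$ since $M$ is a perfect matching of $T[V(M)]$. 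In particular, every matching $M$ of $T$ with $\tau_M(V(T))=d_T(V(M))\le b$ is a feasible choice in (\ref{ed1}) (as $b\le 2m(T)$ permits replacing $\iota(V(T))=b$ by $\iota(V(T))\le b$), so ${\rm dyn}(T,\tau_M)\le{\rm vacc}(T,0,d_T,b)$; it remains to produce one matching attaining equality.

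Finally, fix $\tau$ with $0\le\tau\le d_T$, $\tau(V(T))\le b$ and ${\rm dyn}(T,\tau)={\rm vacc}(T,0,d_T,b)=:k$, and fix, via the min--max identity and shrinking each set to a minimal one, pairwise disjoint $R_1,\dots,R_k\in\mathcal{C}(T,\tau)$. Each $R_i$ induces a subtree with $|R_i|\ge 2$; choose a leaf $\ell_i$ of $T[R_i]$, let $\ell_i'$ be its unique neighbour in $R_i$, and set $M=\{\ell_i\ell_i':1\le i\le k\}$, which is a matching of $T$ with $|M|=k$ since the $R_i$ are pairwise disjoint. Summing the defining inequalities $\tau(u)\ge d_T(u)-|N_T(u)\cap R_i|+1$ over $u\in R_i$ and using that $T[R_i]$ is a tree with $|R_i|-1$ edges gives $\tau(R_i)\ge d_T(R_i)-|R_i|+2$; since each of the remaining $|R_i|-2$ vertices of $R_i$ has degree at least $1$, this yields $\tau(R_i)\ge d_T(\ell_i)+d_T(\ell_i')$. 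As the $R_i$ are disjoint and $\tau\ge 0$,
\[
\tau_M(V(T))=d_T(V(M))=\sum_{i=1}^{k}\big(d_T(\ell_i)+d_T(\ell_i')\big)\le\sum_{i=1}^{k}\tau(R_i)\le\tau(V(T))\le b.
\]
Thus $\tau_M$ is admissible, and by the previous paragraph ${\rm dyn}(T,\tau_M)=|M|=k={\rm vacc}(T,0,d_T,b)$, so $M$ is the required matching. The only genuine difficulty is the tree-specific min--max identity; the rest is bookkeeping around the closed-set reformulation.
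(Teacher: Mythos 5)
The paper does not prove this statement at all: Theorem~\ref{theoremkhza} is quoted from Khoshkhah and Zaker \cite{khza}, so your argument is necessarily an independent route, and as such it is essentially correct and complete in outline. Your reformulation of ${\rm dyn}(T,\tau)$ as the transversal number of the family $\mathcal{C}(T,\tau)$ of minimal nonempty closed sets is sound, the identification of $\mathcal{C}(T,\tau_M)$ with the edge set of $T[V(M)]$ together with K\H{o}nig's theorem correctly gives ${\rm dyn}(T,\tau_M)=|M|$, and the counting step $\tau(R_i)\geq d_T(R_i)-|R_i|+2\geq d_T(\ell_i)+d_T(\ell_i')$ (summing the closedness inequalities over the subtree $T[R_i]$, which has $|R_i|-1$ edges) is exactly what is needed to show that the matching extracted from a maximum packing of minimal closed sets satisfies $\tau_M(V(T))\leq\tau(V(T))\leq b$; note that passing from `$\iota(V(T))=b$' to `$\iota(V(T))\leq b$' is legitimate here since $b\leq 2m(T)=d_T(V(T))$. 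The whole proof therefore hinges, as you say, on the min--max identity ${\rm dyn}(T,\tau)=\nu(\mathcal{C}(T,\tau))$, and this is indeed true: since the members of $\mathcal{C}(T,\tau)$ induce subtrees of $T$, it follows from the classical packing--covering duality for families of subtrees of a tree (equivalently, from normality of subtree hypergraphs), or from a direct greedy argument choosing a deepest vertex $v$ such that some member lies entirely in the subtree below $v$. Your own leaf-deletion induction sketch, however, has a small unrepaired corner case: in the case $\tau(\ell)=1=d_T(w)$ you assert $\{\ell,w\}\in\mathcal{C}(T,\tau)$, which fails if $\tau(w)\leq 0$ (thresholds can become nonpositive during the recursion); there the single-edge component has to be disposed of separately (both sides are then unchanged rather than dropping by one). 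This is easily patched, or avoided altogether by invoking the classical subtree duality, so it does not undermine the proof, but it should be fixed if the induction is the route you keep.
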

We believe that the threshold function $\tau_M$ 
considered in Theorem \ref{theoremkhza} 
is a good choice in general, and pose the following.

\begin{conjecture}\label{conjecture1}
For a given graph $G$, and a given integer $b$ with $0\leq b\leq 2m(G)$,
there is a matching $M$ of $G$ such that 
${\rm vacc}(G,0,d_G,b)\leq 2{\rm dyn}(G,\tau_M)$
and
$\tau_M(V(G))\leq b$,
where $\tau_M$ is as in Theorem \ref{theoremkhza}
(with $T$ replaced by $G$).
\end{conjecture}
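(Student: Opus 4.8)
I treat Conjecture~\ref{conjecture1} as the target and describe the line of attack I would pursue, building on the duality between dynamic monopolies and ``dense'' sets. The plan is to first reduce the statement to a purely combinatorial inequality about cheap matchings. The starting point is the observation that a set $D$ is a dynamic monopoly of $(G,\tau)$ if and only if $D$ meets every non-empty set $A$ in which each vertex $u$ has at least $d_G(u)-\tau(u)+1$ neighbours inside $A$; such a set $A$ is exactly the complement of a proper closed set, and hitting all of them is equivalent to being a dynamic monopoly. Applying this to $\tau_M$, a vertex outside $V(M)$ has threshold $0$ and would need $d_G(u)+1$ neighbours inside $A$, which is impossible, while a vertex of $V(M)$ has threshold $d_G(u)$ and needs only one neighbour inside $A$. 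Hence the relevant sets $A$ are precisely the subsets of $V(M)$ without isolated vertices, whose inclusion-minimal members are the edges of $G[V(M)]$, and therefore
\[
{\rm dyn}(G,\tau_M)={\rm vc}\big(G[V(M)]\big)\ge |M|,
\]
where ${\rm vc}$ denotes the vertex-cover number (the inequality holds because $M$ is a matching of $G[V(M)]$). Writing $t:={\rm vacc}(G,0,d_G,b)$, it thus suffices to exhibit a matching $M$ with $\tau_M(V(G))=\sum_{u\in V(M)}d_G(u)\le b$ and $|M|\ge t/2$.

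Next I would fix an increment $\iota$ attaining the maximum, together with a \emph{minimum} dynamic monopoly $D$ of $(G,\iota)$, so that $|D|=t$. Since $D$ is a minimum transversal of the dense-set hypergraph, it is minimal, so each $x\in D$ owns a private dense set $A_x$ with $A_x\cap D=\{x\}$. The budget controls such sets through the degree inequality obtained by summing the density condition and using the handshake identity in $G[A]$: for every dense set $A$,
\[
\sum_{u\in A}\big(d_G(u)-\iota(u)+1\big)\le 2e(A),
\]
where $e(A)$ is the number of edges inside $A$; equivalently the budget $\sum_{u\in A}\iota(u)$ spent on $A$ dominates $\sum_{u\in A}d_G(u)+|A|-2e(A)$. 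The aim of this step is to assemble, from the private dense sets and their internal edges, a single matching $M$ with $|M|\ge t/2$ whose degree-cost $\sum_{u\in V(M)}d_G(u)$ is paid for by $\iota(V(G))=b$. The factor $2$ is exactly the K\"onig--Gallai gap ${\rm vc}(H)\le 2\nu(H)$: an optimal dense certificate behaves like a subgraph whose vertex-cover number one wants to realise, but a matching recovers only its matching number, and these differ by at most a factor of two, with equality forced precisely by odd components. For bipartite instances, and in particular for trees, there is no gap, which is consistent with the exact equality in Theorem~\ref{theoremkhza}; on the other hand, for $K_n$ a uniform threshold yields ${\rm dyn}\approx b/n$ while the cheapest matching of that degree-cost has about $b/(2n)$ edges, so the factor $2$ is tight.

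The hard part is the budget-controlled extraction in the second paragraph. One cannot reduce to $\{0,d_G\}$-valued increments: already on $K_4$ with budget $8$ the uniform threshold $\iota\equiv 2$ gives ${\rm dyn}=2$, which strictly exceeds the best value attainable by putting $\iota=d_G$ on any vertex set of degree-cost at most $b$, so the argument must cope with genuinely fractional-looking optima and cannot route through the naive induced-subgraph bound ``$\max\{{\rm vc}(G[S]):\sum_{u\in S}d_G(u)\le b\}$''. Concretely, I expect the obstacle to be showing that the private dense sets can be charged to the budget \emph{simultaneously} while yielding vertex-disjoint edges: the sets $A_x$ may overlap heavily off $D$, their cheap internal edges need not be vertex-disjoint, and forcing disjointness is where the loss must be bounded by exactly $2$ rather than more. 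Making this precise --- presumably through a weighted fractional-matching relaxation on the union of the dense sets, followed by a rounding that respects both the per-vertex matching constraint and the degree-cost budget --- is the crux, and is the step that would have to be carried out to convert this plan into a proof of Conjecture~\ref{conjecture1}.
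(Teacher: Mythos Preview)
The statement you are attacking is Conjecture~\ref{conjecture1}, which the paper explicitly leaves \emph{open}; there is no proof of it in the paper to compare against. What the paper does prove is only the special case Theorem~\ref{theorem2} ($G$ $r$-regular and $b\ge(2r-1)(r+1)$), and that argument is quite different from your programme: it bounds ${\rm vacc}(G,0,d_G,b)$ from above by $b/(r+1)$ via the Ackerman--Ben-Zwi--Wolfovitz inequality ${\rm dyn}(G,\iota)\le\sum_{u}\iota(u)/(d_G(u)+1)$, and then simply picks a matching of size about $b/(2r)$ (or a maximum matching, when $2r\nu\le b$), invoking nothing more than ${\rm dyn}(G,\tau_M)\ge |M|$. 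Regularity is precisely what collapses both estimates to functions of the single parameter $b/r$, and the lower-order loss from rounding $|M|$ is absorbed by the hypothesis $b\ge(2r-1)(r+1)$.

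Your identity ${\rm dyn}(G,\tau_M)={\rm vc}(G[V(M)])\ge|M|$ is correct and is the natural sharpening of the bound the paper uses. Your plan --- take an optimal $\iota$ with a minimum dynamic monopoly $D$, use private ``dense'' witnesses $A_x$ for each $x\in D$, and assemble from their internal edges a matching $M$ with $\sum_{u\in V(M)}d_G(u)\le b$ and $|M|\ge|D|/2$ --- is a coherent line of attack, and you are right that one cannot first round $\iota$ to $\{0,d_G\}$-values. However, as you yourself flag, the decisive step is missing: nothing in the outline controls the overlaps among the $A_x$ well enough to extract vertex-disjoint cheap edges, and this is exactly where the difficulty of the conjecture sits. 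One small correction: your $K_n$ remark conflates $|M|$ with ${\rm dyn}(G,\tau_M)$. On the clique, $G[V(M)]\cong K_{2|M|}$, so ${\rm dyn}(G,\tau_M)={\rm vc}(K_{2|M|})=2|M|-1\approx b/n\approx{\rm vacc}$; hence $K_n$ witnesses tightness of your \emph{intermediate} inequality $2|M|\ge{\rm vacc}$, but not of the factor~$2$ in the conjecture itself. In summary, your proposal is a reasonable programme and correctly isolates the obstacle, but it is a plan rather than a proof --- which is consistent with the statement's status in the paper.
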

As a second result we show Conjecture \ref{conjecture1} for some regular graphs.

\begin{theorem}\label{theorem2}
Conjecture \ref{conjecture1} holds if $G$ is $r$-regular and $b\geq (2r-1)(r+1)$.
\end{theorem}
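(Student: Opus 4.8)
The plan is to produce, for every admissible $b$, a suitable matching $M$ by hand, exploiting that for $r$-regular $G$ the function $\tau_M$ is very simple. Let $W=V(M)$ be the set of vertices covered by $M$, so that $\tau_M$ equals $r$ on $W$ and $0$ off $W$, and $\tau_M(V(G))=r|W|=2r|M|$. In the hull process of $(G,\tau_M)$ every vertex outside $W$ is infected immediately, and from then on a vertex $u\in W$ is infected exactly when all of its neighbours that lie in $W$ are infected, so the process restricted to $W$ is the hull process of $(G[W],d_{G[W]})$. For a graph $H$ equipped with the threshold function $d_H$, a set $D$ is a dynamic monopoly if and only if $V(H)\setminus D$ is independent: if $V(H)\setminus D$ contains an edge then that edge is never infected, while if it is independent then every vertex outside $D$ has all its neighbours in $D$ and is infected in the first step. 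Hence ${\rm dyn}(G,\tau_M)$ equals $\tau(G[W])$, the vertex cover number of $G[W]$, and since $M$ is itself a matching of $G[W]$ we get ${\rm dyn}(G,\tau_M)\ge|M|$. So it suffices to find a matching $M$ with $2r|M|\le b$ and $2|M|\ge {\rm vacc}(G,0,d_G,b)$.

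For this I would combine two upper bounds on ${\rm vacc}(G,0,d_G,b)$. Enlarging the bound $d_G$ to $d_G+1$ only enlarges the set of admissible increments, so the Khoshkhah--Zaker formula (\ref{ez}), which evaluates ${\rm vacc}(G,0,d_G+1,b)$, together with $d_G\equiv r$ and $b\le 2m(G)$, gives ${\rm vacc}(G,0,d_G,b)\le {\rm vacc}(G,0,d_G+1,b)=\lfloor b/(r+1)\rfloor$. On the other hand every admissible increment $\iota$ satisfies $0\le\iota\le d_G$, so by the monotonicity of ${\rm dyn}$ under pointwise increase of the threshold function one has ${\rm dyn}(G,\iota)\le {\rm dyn}(G,d_G)=\tau(G)\le 2\nu(G)$, the last step because the ends of a maximum matching form a vertex cover; hence ${\rm vacc}(G,0,d_G,b)\le 2\nu(G)$. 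Now I split on the size of a maximum matching. If $\nu(G)\le\lfloor b/(2r)\rfloor$, let $M$ be a maximum matching; then $2r|M|=2r\nu(G)\le b$ and ${\rm vacc}(G,0,d_G,b)\le 2\nu(G)=2|M|$. If $\nu(G)>\lfloor b/(2r)\rfloor$, let $M$ be any matching with exactly $\lfloor b/(2r)\rfloor$ edges (these exist, as $G$ has matchings of every size up to $\nu(G)$); then $2r|M|\le b$, and it remains only to verify $\lfloor b/(r+1)\rfloor\le 2\lfloor b/(2r)\rfloor$.

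The hypothesis on $b$ is used solely for this last inequality. Writing $b=2rq+s$ with $q=\lfloor b/(2r)\rfloor$ and $0\le s\le 2r-1$, one has $b/(r+1)=2q-(2q-s)/(r+1)$, so $\lfloor b/(r+1)\rfloor\le 2q$ as soon as $2q\ge s$, which holds once $q\ge r$, i.e. once $b\ge 2r^2$; as $(2r-1)(r+1)=2r^2+r-1\ge 2r^2$, the assumption $b\ge (2r-1)(r+1)$ suffices (and it also disposes of the degenerate small cases, since it forces $\lfloor b/(2r)\rfloor\ge 1$ for $r\ge 1$). I expect the only real content to be the identification of ${\rm dyn}(G,\tau_M)$ with the vertex cover number of $G[W]$ via the independence characterization, together with the bound $\tau(G)\le 2\nu(G)$ that covers the large-budget regime; the rest is elementary, and the one thing to be careful about is arranging the two upper bounds on ${\rm vacc}(G,0,d_G,b)$ and the two regimes for $M$ so that the single threshold $b\ge (2r-1)(r+1)$ is enough in both.
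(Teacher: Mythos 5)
Your proof is correct and follows essentially the same route as the paper: the same case distinction between $2r\nu(G)\le b$ and $2r\nu(G)>b$, the same choice of a maximum matching (with the vertex-cover bound $\tau(G)\le 2\nu(G)$) respectively a matching with $\lfloor b/(2r)\rfloor$ edges, the key inequality ${\rm dyn}(G,\tau_M)\ge |M|$, and an upper bound of roughly $b/(r+1)$ on ${\rm vacc}(G,0,d_G,b)$. The only superficial difference is that you derive this upper bound from the Khoshkhah--Zaker formula (\ref{ez}) together with monotonicity in $\iota_{\max}$, whereas the paper invokes the bound of Ackerman et al.; your floor-based arithmetic in fact shows that $b\ge 2r^2$ already suffices.
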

Before we proceed to the proofs of Theorems \ref{theorem1} and \ref{theorem2},
we mention some further related work.
Centeno and Rautenbach \cite{cera} establish bounds 
for the problems considered in \cite{khza}.
In \cite{ehra}, Ehard and Rautenbach consider the following two variants of (\ref{ed1}) for a given triple $(G,\tau,b)$,
where $G$ is a graph, $\tau$ is a threshold function for $G$,
and $b$ is a non-negative integer:
$$
\max\left\{ {\rm dyn}(G-X,\tau):X\in {V(G)\choose b}\right\}
\,\,\,\,\,\,\,\,\,\,\,\mbox{ and }\,\,\,\,\,\,\,\,\,\,\,
\max\left\{ {\rm dyn}(G,\tau_X):X\in {V(G)\choose b}\right\},$$
where
$$\tau_X(u) =
\begin{cases}
d_G(u)+1 & \mbox{, if $u\in X$,}\\
\tau(u) & \mbox{, if $u\in V(G)\setminus X$,}
\end{cases},$$
and ${V(G) \choose b}$ denotes the set of all $b$-element subsets of $V(G)$.
For both variants, they describe efficient algorithms for trees.
In \cite{bhkllerosh} Bhawalkar et al.
study so-called anchored $k$-cores.
For a given graph $G$, and a positive integer $k$, 
the {\it $k$-core} of $G$ is 
the largest induced subgraph of $G$ of minimum degree at least $k$.
It is easy to see that the vertex set of the $k$-core of $G$
equals $V(G)\setminus H_{(G,\tau)}(\emptyset)$
for the special threshold function $\tau=d_G-k+1$.
Now, the {\it anchored $k$-core problem} \cite{bhkllerosh} is to determine 
\begin{eqnarray}\label{ed3}
\max\left\{
\Big|V(G)\setminus H_{(G,\tau_X)}(\emptyset)\Big|:X\in {V(G)\choose b}\right\},
\end{eqnarray}
for a given graph $G$ and non-negative integer $b$.
Bhawalkar et al. show that (\ref{ed3}) is hard to approximate in general,
but can be determined efficiently for $k=2$, and for graphs of bounded treewidth.
Vaccination problems in random settings were studied in \cite{keklta,brjama,de}.

\section{Proofs of Theorem \ref{theorem1} and Theorem \ref{theorem2}}
Throughout this section, 
let $T$ be a tree rooted in some vertex $r$, 
and let $\tau,\iota_{\max}\in \mathbb{Z}^{V(T)}$
be two functions.
For a vertex $u$ of $T$,
and a function $\rho\in \mathbb{Z}^{V(T)}$,
let $V_u$ be the subset of $V(T)$ containing $u$ and its descendants,
let $T_u$ be the subtree of $T$ induced by $V_u$,
and let $\rho^{\to u}\in \mathbb{Z}^{V(T)}$ 
be the function with
$$\rho^{\to u}(v)=
\begin{cases}
\rho(v) & \mbox{, if $v\in V(T)\setminus \{ u\}$, and}\\
\rho(v)-1 & \mbox{, if $v=u$.}
\end{cases}$$
Below we consider threshold functions of the form 
$\rho|_{V_u}+\rho'|_{V_u}$ for the subtrees $T_u$, 
where $\rho$ and $\rho'$ are defined on sets containing $V_u$. For notational simplicity,
we omit the restriction to $V_u$
and write `$\rho+\rho'$' instead of `$\rho|_{V_u}+\rho'|_{V_u}$' 
in these cases.
For an integer $k$ and a non-negative integer $b$, 
let $[k]$ be the set of positive integers at most $k$, 
and let 
$${\cal P}_k(b)=\left\{ (b_1,\ldots,b_k)\in \mathbb{N}_0^k:b_1+\cdots+b_k=b\right\}$$
be the set of ordered partitions of $b$ into $k$ non-negative integers.

Our approach to show Theorem \ref{theorem1} is similar as in \cite{ehra} and
relies on recursive expressions for the following two quantities:
For a vertex $u$ of $T$ and a non-negative integer $b$, let
\begin{itemize}
\item $x_0(u,b)$ be the maximum of ${\rm dyn}(T_u,\tau+\iota)$
over all $\iota\in \mathbb{Z}^{V_u}$ with 
$0\leq \iota(v)\leq \iota_{\max}(v)$ for every $v\in V_u$ , and
$\iota(V_u)=b$, and
\item $x_1(u,b)$ be the maximum of ${\rm dyn}\left(T_u,(\tau+\iota)^{\to u}\right)$
over all $\iota\in \mathbb{Z}^{V_u}$ with 
$0\leq \iota(v)\leq \iota_{\max}(v)$ for every $v\in V_u$ , and
$\iota(V_u)=b$.
\end{itemize}
The increment $\iota$ captures the local increases of the thresholds within $V_u$.
The value $x_1(u,b)$ corresponds to a situation,
where the infection reaches the parent of $u$ before it reaches $u$,
that is, the index $0$ or $1$ indicates the amount of help
that $u$ receives from outside of $V_u$.

Note that $x_j(u,b)=-\infty$ 
if and only if $b>\iota_{\max}(V_u)$
for both $j$ in $\{ 0,1\}$.
If $b\leq\iota_{\max}(V_u)$,
then let $\iota_0(u,b),\iota_1(u,b)\in \mathbb{Z}^{V_u}$ with 
$0\leq \iota_j(u,b)\leq \iota_{\max}$, and
$\iota_j(u,b)(V_u)=b$ for both $j\in  \{ 0,1\}$,
be such that 
\begin{eqnarray*}
x_0(u,b)&=&{\rm dyn}\Big(T_u,\tau+\iota_0(u,b)\Big)\mbox{ and}\\
x_1(u,b)&=&{\rm dyn}\Big(T_u,\Big(\tau+\iota_1(u,b)\Big)^{\to u}\Big),
\end{eqnarray*}
where, if possible, let $\iota_0(u,b)=\iota_1(u,b)$.
As we show in Corollary \ref{corollary1} below,
$\iota_0(u,b)$ always equals $\iota_1(u,b)$,
which is a key fact for our approach.

\begin{lemma}\label{lemma0}
	$x_0(u,b)\geq x_1(u,b)$, and if $x_0(u,b)=x_1(u,b)$, then $\iota_0(u,b)=\iota_1(u,b)$.
\end{lemma}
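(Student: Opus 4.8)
The plan is to prove the two assertions separately, both by comparing dynamic monopolies of $(T_u,\rho)$ and $(T_u,\rho^{\to u})$ for $\rho=\tau+\iota$. The key elementary observation is monotonicity: since $\rho^{\to u}\leq \rho$ pointwise, every dynamic monopoly of $(T_u,\rho^{\to u})$ is also a dynamic monopoly of $(T_u,\rho)$, so ${\rm dyn}(T_u,\rho)\leq {\rm dyn}(T_u,\rho^{\to u})$. This already looks like it gives the \emph{wrong} inequality for $x_0\geq x_1$, so the real content is that one cannot gain more than the $x_1$-maximizer would give; I would instead argue directly on the optimal increments. Fix $\iota=\iota_1(u,b)$ attaining $x_1(u,b)$, and let $D$ be a minimum dynamic monopoly of $(T_u,(\tau+\iota)^{\to u})$, so $|D|=x_1(u,b)$. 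I claim $D\cup\{u\}\setminus\{w\}$, or more carefully a suitable modification, is a dynamic monopoly of $(T_u,\tau+\iota)$ of order at most $x_1(u,b)$ only in degenerate cases; the clean statement is rather that $D$ itself, or $D$ with $u$ added, witnesses $x_0(u,b)\geq |D|-\,[\text{something}]$. The honest approach: take the $\iota$ that is optimal for $x_1(u,b)$ and show that with the \emph{same} $\iota$, a minimum dynamic monopoly of $(T_u,\tau+\iota)$ has order at least that of a minimum dynamic monopoly of $(T_u,(\tau+\iota)^{\to u})$ minus one, and then a separate easy argument recovers the full inequality. Let me reststructure.

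First I would prove $x_0(u,b)\ge x_1(u,b)$. Let $\iota$ be optimal for $x_1(u,b)$ and put $\rho=\tau+\iota$. Let $D$ be a minimum dynamic monopoly of $(T_u,\rho)$, so $|D|={\rm dyn}(T_u,\rho)\le x_0(u,b)$ (using $\iota$ as a feasible increment for $x_0$). It suffices to show ${\rm dyn}(T_u,\rho)\ge {\rm dyn}(T_u,\rho^{\to u})=x_1(u,b)$, i.e. ${\rm dyn}(T_u,\rho)\ge {\rm dyn}(T_u,\rho^{\to u})$. But this is false in general as a pointwise-monotonicity statement — it is the \emph{reverse} of monotonicity. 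So the correct route is: start from $D$ minimum for $(T_u,\rho^{\to u})$ with $|D|=x_1(u,b)$, and build from it a dynamic monopoly of $(T_u,\tau+\iota')$ for a possibly different increment $\iota'$ with $\iota'(V_u)=b$, of order $\ge |D|$, which then shows $x_0(u,b)\ge x_1(u,b)$. The natural choice: run the hull process for $(T_u,\rho^{\to u})$ from $D$; vertex $u$ gets infected at some step having one fewer neighbor requirement than in $(T_u,\rho)$. If $u\notin D$, then $D\cup\{u\}$ is a dynamic monopoly of $(T_u,\rho)$ of order $|D|+1>|D|=x_1(u,b)$, hence $x_0(u,b)\ge |D|+1\ge x_1(u,b)$; if $u\in D$, a short exchange argument removing $u$ from $D$ and re-examining the process, possibly shifting one unit of budget, yields a dynamic monopoly of the same order for $(T_u,\tau+\iota')$ with the budget still summing to $b$. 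The main obstacle is precisely this exchange: ensuring that after toggling whether $u\in D$ and adjusting $\iota$ at a single vertex, the total budget is preserved and feasibility $0\le\iota'\le\iota_{\max}$ is maintained. I expect this to hinge on whether $\iota(u)$ can be decreased by one (freeing a unit to place elsewhere) or $\iota_{\max}(u)>\iota(u)$ so it can be increased.

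For the second assertion, suppose $x_0(u,b)=x_1(u,b)$ and let $\iota_1(u,b)$ be optimal for $x_1$. I must show $\iota_1(u,b)$ is also optimal for $x_0$, so that the tie-breaking rule "if possible let $\iota_0(u,b)=\iota_1(u,b)$" forces equality. Put $\rho=\tau+\iota_1(u,b)$. Monotonicity gives ${\rm dyn}(T_u,\rho)\le {\rm dyn}(T_u,\rho^{\to u})=x_1(u,b)=x_0(u,b)$, and feasibility of $\iota_1(u,b)$ for $x_0$ gives ${\rm dyn}(T_u,\rho)\le x_0(u,b)$ trivially; so I need the reverse ${\rm dyn}(T_u,\rho)\ge x_0(u,b)$. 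From the chain above, ${\rm dyn}(T_u,\rho)\ge {\rm dyn}(T_u,\rho^{\to u})$ would finish it — but again that is the reverse of monotonicity, so equality ${\rm dyn}(T_u,\rho)={\rm dyn}(T_u,\rho^{\to u})$ must be extracted from $x_0=x_1$ more cleverly. The argument I anticipate: let $\iota_0=\iota_0(u,b)$ be optimal for $x_0$ with $\rho_0=\tau+\iota_0$, so ${\rm dyn}(T_u,\rho_0)=x_0(u,b)=x_1(u,b)\ge {\rm dyn}(T_u,\rho_0^{\to u})$, while monotonicity gives $\le$; hence ${\rm dyn}(T_u,\rho_0^{\to u})=x_0(u,b)$, meaning $\iota_0$ is \emph{also} optimal for $x_1$. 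Thus there exists a common optimizer, so by the tie-breaking convention $\iota_0(u,b)=\iota_1(u,b)$. The one gap to fill carefully is why ${\rm dyn}(T_u,\rho_0)\ge {\rm dyn}(T_u,\rho_0^{\to u})$, i.e. the general inequality $x_0(u,b)\ge x_1(u,b)$ applied with the specific function $\rho_0$ rather than as a max — which follows once the first part is proved at the level of a fixed increment rather than only for the maxima. So the cleanest organization is: (1) prove, for every feasible $\iota$, that ${\rm dyn}(T_u,\tau+\iota)\ge {\rm dyn}\big(T_u,(\tau+\iota)^{\to u}\big)$ is \emph{not} what we want; rather prove $\max_\iota {\rm dyn}(T_u,\tau+\iota)\ge {\rm dyn}\big(T_u,(\tau+\iota^\ast)^{\to u}\big)$ for the $x_1$-optimizer $\iota^\ast$ via the $u\in D$ / $u\notin D$ case split; (2) deduce equality of optimizers from the equality $x_0=x_1$ by the sandwiching argument just described. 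I expect step (1), and specifically the budget-preserving exchange when $u$ lies in the minimum dynamic monopoly, to be the crux.
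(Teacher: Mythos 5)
There is a genuine error at the very start that derails the whole attempt: you state monotonicity backwards. Since $\rho^{\to u}\leq\rho$ pointwise, it is every dynamic monopoly of $(T_u,\rho)$ that is also a dynamic monopoly of $(T_u,\rho^{\to u})$ (lower thresholds only make infection easier), so ${\rm dyn}(T_u,\rho^{\to u})\leq{\rm dyn}(T_u,\rho)$ --- exactly the monotonicity ${\rm dyn}(G,\rho)\leq{\rm dyn}(G,\rho')$ for $\rho\leq\rho'$ recorded in the introduction, and exactly the \emph{right} inequality for the lemma, not the ``wrong'' one. With the correct direction the proof is two lines, and it is the paper's proof: let $\iota_1=\iota_1(u,b)$ and let $D$ be a minimum dynamic monopoly of $\left(T_u,\tau+\iota_1\right)$; since $\iota_1$ is feasible for $x_0$, we have $x_0(u,b)\geq|D|$, and since $D$ is also a dynamic monopoly of $\left(T_u,(\tau+\iota_1)^{\to u}\right)$, we have $|D|\geq{\rm dyn}\left(T_u,(\tau+\iota_1)^{\to u}\right)=x_1(u,b)$. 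If $x_0(u,b)=x_1(u,b)$, this chain collapses to equalities, so $\iota_1$ is also an optimizer for $x_0$, and the tie-breaking convention gives $\iota_0(u,b)=\iota_1(u,b)$.

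Because of the reversed inequality you abandon this route and substitute an exchange argument (case split on whether $u\in D$, shifting a unit of budget, checking $0\leq\iota'\leq\iota_{\max}$) that you yourself identify as the crux and never carry out; it is both unnecessary and, as written, logically flawed: from ``$D\cup\{u\}$ is a dynamic monopoly of $(T_u,\rho)$ of order $|D|+1$'' you may conclude ${\rm dyn}(T_u,\rho)\leq|D|+1$, which is an upper bound on ${\rm dyn}$ and does not yield $x_0(u,b)\geq|D|+1$. The same reversal infects your second part: your sandwich argument that $\iota_0(u,b)$ is also optimal for $x_1$ uses ``monotonicity gives ${\rm dyn}(T_u,\rho_0)\leq{\rm dyn}(T_u,\rho_0^{\to u})$,'' which is false; with the correct direction you only obtain ${\rm dyn}(T_u,\rho_0^{\to u})\leq x_1(u,b)$ and cannot conclude optimality of $\iota_0(u,b)$ for $x_1$. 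The repair is simply to argue in the other direction, as above: show that the $x_1$-optimizer $\iota_1(u,b)$ is also an $x_0$-optimizer when $x_0(u,b)=x_1(u,b)$, which the corrected monotonicity chain gives for free.
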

\begin{proof}
If $x_1(u,b)=-\infty$, then the statement is trivial.
Hence, we may assume that $x_1(u,b)>-\infty$,
which implies that the function $\iota_1(u,b)$ is defined.
Let $D$ be a minimum dynamic monopoly of $\left(T_u,\tau+\iota_1(u,b)\right)$.
By the definition of $x_0(u,b)$, we have $x_0(u,b)\geq |D|$.
Since $D$ is a dynamic monopoly of $\left(T_u,(\tau+\iota_1(u,b))^{\to u}\right)$,
we obtain 
$x_0(u,b)\geq |D|\geq {\rm dyn}\left(T_u,(\tau+\iota_1(u,b))^{\to u}\right)=x_1(u,b)$.
Furthermore, if $x_0(u,b)=x_1(u,b)$, then 
$x_0(u,b)=|D|={\rm dyn}\left(T_u,\tau+\iota_1(u,b)\right)$,
which implies $\iota_0(u,b)=\iota_1(u,b)$.
\end{proof}

\begin{lemma}\label{lemma1}
If $u$ is a leaf of $T$, 
and $b$ is an integer with $0\leq b\leq \iota_{\max}(u)$, 
then, for $j\in \{ 0,1\}$,
\begin{eqnarray*}
x_j(u,b)&=&
\begin{cases}
0 &\mbox{, if $\tau(u)+b-j\leq 0$,}\\
1 &\mbox{, otherwise, and}
\end{cases}\\
\iota_j(u,b)(u)&=&b.
\end{eqnarray*}
\end{lemma}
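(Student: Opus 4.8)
The plan is to treat this as the base case of the dynamic programme, so the proof is essentially a direct computation from the definitions, with no real obstacle. Let $u$ be a leaf and fix an integer $b$ with $0\le b\le\iota_{\max}(u)$. Since $V_u=\{u\}$, the only $\iota\in\mathbb{Z}^{V_u}$ satisfying $0\le\iota\le\iota_{\max}$ and $\iota(V_u)=b$ is the function with $\iota(u)=b$; this immediately gives $\iota_j(u,b)(u)=b$ for $j\in\{0,1\}$, and it means that both maxima in the definitions of $x_0(u,b)$ and $x_1(u,b)$ are over a single threshold function.

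Next I would compute ${\rm dyn}$ of that single instance. The tree $T_u$ is the single vertex $u$, so the only candidate dynamic monopolies are $\emptyset$ and $\{u\}$. For $x_0(u,b)$ the relevant threshold is $\tau(u)+b$: the empty set is a dynamic monopoly of $(T_u,\tau+\iota)$ precisely when $u$ enters the hull with zero neighbours in it, i.e.\ when $(\tau+\iota)(u)=\tau(u)+b\le 0$, in which case ${\rm dyn}=0$; otherwise ${\rm dyn}=1$, witnessed by $\{u\}$ (note $N_{T_u}(u)=\emptyset$, so $u$ never gets help from a neighbour). For $x_1(u,b)$ the relevant threshold is $(\tau+\iota)^{\to u}(u)=\tau(u)+b-1$, since the superscript ${\to u}$ subtracts $1$ at $u$; the same dichotomy then yields ${\rm dyn}=0$ when $\tau(u)+b-1\le 0$ and ${\rm dyn}=1$ otherwise. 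Writing $j\in\{0,1\}$ uniformly, the threshold in question is $\tau(u)+b-j$, which gives exactly the claimed formula for $x_j(u,b)$.

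Finally, the clause ``if possible, let $\iota_0(u,b)=\iota_1(u,b)$'' in the definition is automatically satisfied here, because as observed there is only one feasible increment at all, so $\iota_0(u,b)$ and $\iota_1(u,b)$ are forced to coincide; in particular the conclusion of Lemma~\ref{lemma0} is visibly consistent with this base case. There is no hard step: the whole argument is just unwinding the definitions of ${\rm dyn}$, of the hull, and of the operation $\rho\mapsto\rho^{\to u}$ for the trivial one-vertex tree, and checking the two cases $\tau(u)+b-j\le 0$ and $\tau(u)+b-j>0$ separately.
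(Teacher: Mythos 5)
Your proof is correct and matches the paper's approach: the paper simply states that the equalities follow immediately from the definitions, and your argument is exactly that definition-unwinding (unique feasible increment on $V_u=\{u\}$, ${\rm dyn}$ of a one-vertex tree is $0$ or $1$ according to whether the threshold $\tau(u)+b-j$ is nonpositive).
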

\begin{proof}
These equalities follow immediately from the definitions.
\end{proof}

\begin{lemma}\label{lemma2}
Let $u$ be a vertex of $T$ that is not a leaf, 
and let $b$ be a non-negative integer.
If $v_1,\ldots,v_k$ are the children of $u$,
and ${\iota_0}(v_i,b_i)={\iota_1}(v_i,b_i)$ 
for every $i\in [k]$ and every integer $b_i$
with $0\leq b_i\leq \iota_{\max}(V_{v_i})$,
then, for $j\in \{ 0,1\}$,
\begin{eqnarray}
x_j(u,b) &=& z_j(u,b)\mbox{, and}\label{e1}\\
{\iota_0}(u,b)&=&{\iota_1}(u,b),\mbox{ if $b\leq \iota_{\max}(V_u)$},\label{e3}
\end{eqnarray}
where $z_j(u,b)$ is defined as
$$
\max\left\{ \delta_j(b_u,b_1,\ldots,b_k)+\sum\limits_{i=1}^kx_1(v_i,b_i):(b_u,b_1,\ldots,b_k)\in {\cal P}_{k+1}(b)\mbox{ with }b_u\leq \iota_{\max}(u)\right\},
$$
and, for $(b_u,b_1,\ldots,b_k)\in {\cal P}_{k+1}(b)$
 with $b_u\leq \iota_{\max}(u)$,
\begin{eqnarray*}
\delta_j(b_u,b_1,\ldots,b_k)& :=&
\begin{cases}
0 &\mbox{, if $\Big|\Big\{ i\in [k]:x_0(v_i,b_i)=x_1(v_i,b_i)\Big\}\Big|\geq \tau(u)+b_u-j$, and}\\
1 &\mbox{, otherwise.}
\end{cases}
\end{eqnarray*}
\end{lemma}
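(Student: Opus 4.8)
The plan is to prove the two claims by a careful case analysis on how a minimum dynamic monopoly of $(T_u,\tau+\iota)$ (respectively of $(T_u,(\tau+\iota)^{\to u})$) decomposes along the edges joining $u$ to its children $v_1,\dots,v_k$. The guiding principle is that once we remove $u$, the tree $T_u$ splits into the subtrees $T_{v_1},\dots,T_{v_k}$, so a dynamic monopoly of $T_u$ restricts to a dynamic monopoly of each $T_{v_i}$ with the threshold function adjusted according to whether $u$ helps $v_i$ or not — and $u$ helps $v_i$ (by being infected before $v_i$) precisely in the situations governed by the index $1$ versus $0$ on the children. The key structural observation, which I would isolate first, is: if $D$ is a dynamic monopoly of $(T_u,\rho)$ with $u\notin D$ and $u$ becomes infected during the propagation, then at the moment $u$ is added it has at most $\rho(u)$ already-infected neighbors among the $v_i$, hence for all but at most $\rho(u)$ of the children the infection must have reached $v_i$ \emph{without} any help from $u$; equivalently, $D\cap V_{v_i}$ is a dynamic monopoly of $(T_{v_i},\rho)$ for those children and only a dynamic monopoly of $(T_{v_i},\rho^{\to v_i})$ for the remaining (at most $\rho(u)$) children. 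Conversely, from dynamic monopolies of the pieces one can reassemble a dynamic monopoly of $T_u$, possibly needing to add $u$ itself exactly when too few children finish unaided — this is where the $\delta_j$ term and the $+1$ enter.

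Concretely, for the inequality $x_j(u,b)\le z_j(u,b)$: fix an optimal increment $\iota$ with $\iota(V_u)=b$ realizing $x_j(u,b)$, set $b_u=\iota(u)$ and $b_i=\iota(V_{v_i})$, so $(b_u,b_1,\dots,b_k)\in\mathcal P_{k+1}(b)$ and $b_u\le\iota_{\max}(u)$. Let $D$ be a minimum dynamic monopoly. If $u\in D$, remove $u$: then $D\setminus\{u\}$ restricted to each $V_{v_i}$ is a dynamic monopoly of $(T_{v_i},(\tau+\iota)^{\to v_i})$ (since $u\in D$ helps every child), giving $|D|\ge 1+\sum_i x_1(v_i,b_i)$, and one checks the $\delta_j$ term is then $\le 1$, consistent. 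If $u\notin D$, then $u$ is eventually infected and by the structural observation at least $k-(\tau(u)+b_u-j)$ children finish unaided; but "finishing unaided in $(T_{v_i},\tau+\iota)$" with the same budget $b_i$ costs at least $x_0(v_i,b_i)$, while by Lemma~\ref{lemma0} and the hypothesis $\iota_0(v_i,b_i)=\iota_1(v_i,b_i)$ we have $x_0(v_i,b_i)=x_1(v_i,b_i)$ exactly when $v_i$ can finish unaided without spending extra budget — so the number of unaided children is at most $|\{i:x_0(v_i,b_i)=x_1(v_i,b_i)\}|$, forcing $|\{i:x_0(v_i,b_i)=x_1(v_i,b_i)\}|\ge \tau(u)+b_u-j$, i.e. $\delta_j(b_u,b_1,\dots,b_k)=0$; summing the per-child lower bounds $|D\cap V_{v_i}|\ge x_1(v_i,b_i)$ gives $|D|\ge z_j(u,b)$'s value at this partition. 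For the reverse inequality $x_j(u,b)\ge z_j(u,b)$: take a partition attaining $z_j(u,b)$, use the common optimal increments $\iota_1(v_i,b_i)=\iota_0(v_i,b_i)$ on each $V_{v_i}$ and $b_u$ on $u$, take minimum dynamic monopolies $D_i$ of $(T_{v_i},(\tau+\iota)^{\to v_i})$, and set $D=\bigcup_i D_i$ if $\delta_j=0$ (then enough children finish and their propagation reaches $u$ with $\ge\tau(u)+b_u-j$ infected neighbors, infecting $u$, which then finishes the remaining children since each $D_i$ was only a $\rho^{\to v_i}$-monopoly) or $D=\{u\}\cup\bigcup_i D_i$ if $\delta_j=1$; in both cases $|D|$ matches.

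Finally, for the equality of optimal increments (\ref{e3}): once $x_0(u,b)=x_1(u,b)$, Lemma~\ref{lemma0} already yields $\iota_0(u,b)=\iota_1(u,b)$; in the remaining case $x_0(u,b)>x_1(u,b)$, the construction above produces a single increment $\iota$ — built from the common child-increments $\iota_1(v_i,b_i)$ and the same $b_u$ — that simultaneously realizes $x_0(u,b)$ (with $j=0$) and $x_1(u,b)$ (with $j=1$): indeed the chosen partition and child-increments do not depend on $j$, only the $\delta_j$ correction and the decision to include $u$ do, and one should verify that an optimal partition for $j=0$ can be taken optimal for $j=1$ as well (or vice versa), using monotonicity of $z_j$ in $j$. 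Hence one may set $\iota_0(u,b)=\iota_1(u,b)$, establishing (\ref{e3}).

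The main obstacle I anticipate is the bookkeeping in the "$u\notin D$" direction: translating "at least $k-(\tau(u)+b_u-j)$ children are infected without help from $u$" into a clean lower bound in terms of the $x_1(v_i,b_i)$ and the set $\{i:x_0(v_i,b_i)=x_1(v_i,b_i)\}$ requires using the hypothesis $\iota_0(v_i,b_i)=\iota_1(v_i,b_i)$ to argue that whenever a child \emph{can} finish unaided on budget $b_i$ it does so at cost exactly $x_1(v_i,b_i)$ (no cheaper, by Lemma~\ref{lemma0}), and whenever it \emph{cannot} ($x_0>x_1$) the monopoly $D\cap V_{v_i}$ must still be an aided monopoly of size $\ge x_1(v_i,b_i)$. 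Getting this dichotomy exactly right — and making sure the counting of "aided" children is consistent between the lower and upper bound arguments — is the delicate part; the rest is routine propagation-order reasoning.
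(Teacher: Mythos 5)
Your global strategy---decomposing $T_u$ at $u$ into the subtrees $T_{v_i}$, distinguishing children infected before $u$ (unaided) from children infected after $u$ (aided), and charging $\delta_j$ for $u$ itself---is the same as the paper's, but the two inequalities in (\ref{e1}) are argued with their roles interchanged, and this is a genuine gap rather than a labelling slip. For $x_j(u,b)\le z_j(u,b)$ you fix the increment $\iota$ realizing $x_j(u,b)$ and then derive \emph{lower} bounds such as $|D\cap V_{v_i}|\ge x_1(v_i,b_i)$ and $|D|\ge 1+\sum_i x_1(v_i,b_i)$; a lower bound on $|D|=x_j(u,b)$ cannot prove an upper bound, and these bounds are unjustified anyway, because $\iota$ restricted to $V_{v_i}$ is an arbitrary increment of total $b_i$, so by the maximality in the definition of $x_1$ one only knows ${\rm dyn}\big(T_{v_i},(\tau+\iota)^{\to v_i}\big)\le x_1(v_i,b_i)$. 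The correct argument for this direction \emph{constructs} a dynamic monopoly of size at most $\delta_j(b_u,b_1,\ldots,b_k)+\sum_i x_1(v_i,b_i)$: minimum aided monopolies of the children plus $u$, and, when $\delta_j=0$, dropping $u$ and using minimum unaided monopolies on $\tau(u)+b_u-j$ children with $x_0=x_1$. Conversely, for $x_j(u,b)\ge z_j(u,b)$ you only construct a monopoly of size $\delta_j+\sum_i x_1(v_i,b_i)$, which shows ${\rm dyn}\le$ that value---again the wrong direction; what is needed, for the increment assembled from the child-optimal increments $\iota_1(v_i,b_i)$ and $b_u$ at $u$, is that \emph{every} dynamic monopoly of that instance has size at least $\delta_j+\sum_i x_1(v_i,b_i)$. (Your construction also fails on its own terms when $\delta_j=0$: the $D_i$ are aided monopolies, so without $u$ infected nothing forces any $v_i$ to become infected at all.)

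That missing lower bound is exactly where the hypothesis $\iota_0(v_i,b_i)=\iota_1(v_i,b_i)$ does its work, and your structural observation gets the bookkeeping wrong. If $u\notin D$ and $u$ is eventually infected, then at least $\tau(u)+b_u-j$ children (not $k-(\tau(u)+b_u-j)$, as you write) are infected before $u$, and for those children $D\cap V_{v_i}$ is an unaided dynamic monopoly of $(T_{v_i},\tau+\iota)$. Your claim that the number of unaided children is at most $|\{ i:x_0(v_i,b_i)=x_1(v_i,b_i)\}|$, forcing $\delta_j=0$, is false: a child with $x_0(v_\ell,b_\ell)>x_1(v_\ell,b_\ell)$ can perfectly well finish unaided, it is merely more expensive. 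The actual point is the opposite: if $\delta_j=1$ and $u\notin D$, some unaided child $\ell$ has $x_0(v_\ell,b_\ell)>x_1(v_\ell,b_\ell)$, and because the increment on $V_{v_\ell}$ equals $\iota_1(v_\ell,b_\ell)=\iota_0(v_\ell,b_\ell)$, one gets $|D\cap V_{v_\ell}|\ge x_0(v_\ell,b_\ell)\ge x_1(v_\ell,b_\ell)+1$, which pays for the $+1$; likewise your step ``finishing unaided costs at least $x_0(v_i,b_i)$'' is invalid under an arbitrary increment and also needs the child-optimal one. Finally, your sketch of (\ref{e3}) presupposes that the assembled increment realizes $x_0(u,b)$ and $x_1(u,b)$ exactly, which again requires the missing lower bounds (together with the observation that when $x_0(u,b)>x_1(u,b)$ a maximizer of $z_0$ also maximizes $z_1$ and then exactly $\tau(u)+b_u-1$ children satisfy $x_0=x_1$), so (\ref{e3}) is not established either.
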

\begin{proof}	
By symmetry, it suffices to consider the case $j=0$.

First, suppose that $b>\iota_{\max}(V_u)$.
If $(b_u,b_1,\ldots,b_k)\in {\cal P}_{k+1}(b)$ with 
$b_u\leq \iota_{\max}(u)$,
then $b_i>\iota_{\max}(V_{v_i})$ for some $i\in [k]$,
which implies $z_0(u,b)=-\infty=x_0(u,b)$.

Now, let $b\leq n(T_u)$, which implies $x_0(u,b)>-\infty$.
The following two claims complete the proof of (\ref{e1}).
\begin{claim}\label{claim1}
	$x_0(u,b)\geq z_0(u,b)$.
\end{claim}
\begin{proof}[Proof of Claim \ref{claim1}]
It suffices to show that
$x_0(u,b)\geq \delta_0(b_u,b_1,\ldots,b_k)+\sum\limits_{i=1}^kx_1(v_i,b_i)$
for every choice of $(b_u,b_1,\ldots,b_k)$ in ${\cal P}_{k+1}(b)$ with $b_u\leq \iota_{\max}(u)$
and $b_i\leq \iota_{\max}(V_{v_i})$ for every $i\in [k]$.
Let $(b_u,b_1,\ldots,b_k)$ be one such an element.
Let $\iota_u\in \mathbb{Z}^{V_u}$ be defined as 
\begin{eqnarray}\label{eiotau}
	\iota_u(v)=
	\begin{cases}
		b_u &\mbox{, if $v=u$, and}\\
		0 &\mbox{, otherwise,}
	\end{cases}
\end{eqnarray}
and let $\iota=\iota_u+\sum\limits_{i=1}^k\iota_1(v_i,b_i)$,
where $\iota_1(v_i,b_i)(u)$ is set to $0$ for every $i\in [k]$.
Since $\iota(V_u)=b$
and $0\leq\iota\leq\iota_{\max}$, we have $x_0(u,b)\geq {\rm dyn}(T_u,\tau+\iota)$.

Let $D$ be a minimum dynamic monopoly of $(T_u,\tau+\iota)$,
that is, $|D|\leq x_0(u,b)$.
For each $i\in [k]$,
it follows that the set $D_i=D\cap V_{v_i}$ 
is a dynamic monopoly of $\left(T_{v_i},(\tau+\iota)^{\to v_i}\right)$.
Since, restricted to $V_{v_i}$, 
the two functions $(\tau+\iota)^{\to v_i}$
and $(\tau+\iota_1(v_i,b_i))^{\to v_i}$ coincide,
we obtain 
$$|D_i|\geq {\rm dyn}\Big(T_{v_i},\Big(\tau+\iota_1(v_i,b_i)\Big)^{\to v_i}\Big)\geq x_1(v_i,b_i).$$
If $\delta_0(b_u,b_1,\ldots,b_k)=0$,
then $|D|\geq \sum\limits_{i=1}^k|D_i|\geq \delta_0(b_u,b_1,\ldots,b_k)+\sum\limits_{i=1}^kx_1(v_i,b_i)$.
Similarly, if $u\in D$, then 
$|D|=1+\sum\limits_{i=1}^k|D_i|
\geq \delta_0(b_u,b_1,\ldots,b_k)+\sum\limits_{i=1}^kx_1(v_i,b_i)$.
Therefore, we may assume that 
$\delta_0(b_u,b_1,\ldots,b_k)=1$ and that $u\not\in D$.
This implies that there is some $\ell\in [k]$ with $x_0(v_\ell,b_\ell)>x_1(v_\ell,b_\ell)$
such that $D_\ell=D\cap V_{v_\ell}$ is a dynamic monopoly of $\left(T_{v_\ell},\tau+\iota\right)$.
Since, by assumption, 
$\iota_0(v_\ell,b_\ell)=\iota_1(v_\ell,b_\ell)$, we obtain that, restricted to $V_{v_\ell}$,
the two functions $\tau+\iota$ and $\tau+\iota_0(v_\ell,b_\ell)$ coincide,
which implies
$|D_\ell|\geq 
{\rm dyn}\left( T_{v_\ell},\tau+\iota_0(v_\ell,b_\ell)\right)
=x_0(v_\ell,b_\ell)\geq 1+x_1(v_\ell,b_\ell)$.
Therefore, also in this case,
$|D|=|D_\ell|+\sum\limits_{i\in [k]\setminus \{\ell\}}|D_i|
\geq \delta_0(b_u,b_1,\ldots,b_k)+\sum\limits_{i=1}^kx_1(v_i,b_i)$.
\end{proof}

\begin{claim}\label{claim3}
	$x_0(u,b)\leq z_0(u,b)$.
\end{claim}
\begin{proof}[Proof of Claim \ref{claim3}]
Let $\iota=\iota_0(u,b)$, that is, $x_0(u,b)={\rm dyn}(T_u,\tau+\iota)$.
Let $b_i=\iota(V_{v_i})$ for every $i\in [k]$, and let $b_u=b-\sum\limits_{i=1}^kb_i$.
Clearly, $(b_u,b_1,\ldots,b_k)\in{\cal P}_{k+1}(b)$ and $b_u\leq \iota_{\max}(u)$.
Let $D_i$ be a minimum dynamic monopoly of 
$\left(T_{v_i},(\tau+\iota)^{\to v_i}\right)$
for every $i\in [k]$.
By the definition of $x_1(v_i,b_i)$, 
we obtain $|D_i|\leq x_1(v_i,b_i)$.
Let $D=\{ u\}\cup \bigcup\limits_{i=1}^kD_i$.
The set $D$ is a dynamic monopoly of $(T_u,\tau+\iota)$, which implies $x_0(u,b)\leq |D|$.

If $\delta_0(b_u,b_1,\ldots,b_k)=1$,
then 
$$x_0(u,b)
\leq |D|
=1+\sum\limits_{i=1}^k|D_i|
\leq \delta_0(b_u,b_1,\ldots,b_k)+\sum\limits_{i=1}^kx_1(v_i,b_i)
\leq z_0(u,b).$$
Therefore, we may assume that $\delta_0(b_u,b_1,\ldots,b_k)=0$.
By symmetry, we may assume that 
$x_0(v_i,b_i)=x_1(v_i,b_i)$ for every $i\in [\tau(u)+b_u]$.
Let $D_i'$ be a minimum dynamic monopoly of 
$\left(T_{v_i},\tau+\iota\right)$ for every $i\in [\tau(u)+b_u]$.
By the definition of $x_0(v_i,b_i)$, 
we obtain $|D'_i|\leq x_0(v_i,b_i)=x_1(v_i,b_i)$.
Let $D'=\bigcup\limits_{i\in [\tau(u)+b_u]}D'_i
\cup \bigcup\limits_{i\in [k]\setminus [\tau(u)+b_u]}D_i$.
The set $D'$ is a dynamic monopoly of $(T_u,\tau+\iota)$.
This implies
$$x_0(u,b)\leq |D'|
=\sum\limits_{i\in [\tau(u)+b_u]}|D'_i|
+\sum\limits_{i\in [k]\setminus [\tau(u)+b_u]}|D_i|
\leq\sum\limits_{i\in [k]}x_1(v_i,b_i)
\leq z_0(u,b),$$
which completes the proof of the claim.
\end{proof}
It remains to show (\ref{e3}).
If $x_0(u,b)=x_1(u,b)$, then (\ref{e3}) follows from Lemma \ref{lemma0}. Hence, we may assume that $x_0(u,b)>x_1(u,b)$.
Since, by definition, 
$$\delta_1(b_u,b_1,\ldots,b_k)\leq \delta_0(b_u,b_1,\ldots,b_k)\leq \delta_1(b_u,b_1,\ldots,b_k)+1$$ 
for every $(b_u,b_1,\ldots,b_k)\in {\cal P}_{k+1}(b)$ with $b_u\leq\iota_{\max}(u)$,
we obtain $z_1(u,b)\leq z_0(u,b)\leq z_1(u,b)+1$.
Together with (\ref{e1}), 
the inequality $x_0(u,b)>x_1(u,b)$ implies that 
\begin{eqnarray*}
	x_0(u,b)&=&z_0(u,b)>z_1(u,b)=x_1(u,b)\mbox{ and}\\ z_1(u,b)&=&z_0(u,b)-1.
\end{eqnarray*}
Let $(b_u,b_1,\ldots,b_k)\in {\cal P}_{k+1}(b)$ with $b_u\leq\iota_{\max}(u)$ be such that 
$$z_0(u,b)=
\delta_0(b_u,b_1,\ldots,b_k)+\sum\limits_{i=1}^kx_1(v_i,b_i).$$
We obtain
\begin{eqnarray*}
	z_1(u,b)
	&\geq &
	\delta_1(b_u,b_1,\ldots,b_k)+\sum\limits_{i=1}^kx_1(v_i,b_i)\\
	&\geq &\delta_0(b_u,b_1,\ldots,b_k)-1+\sum\limits_{i=1}^kx_1(v_i,b_i)\\
	&=&z_0(u,b)-1\\
	&=&z_1(u,b),
\end{eqnarray*}
which implies 
$z_1(u,b)=
\delta_1(b_u,b_1,\ldots,b_k)+\sum\limits_{i=1}^kx_1(v_i,b_i)$,
that is, the same choice of $(b_u,b_1,\ldots,b_k)$ in ${\cal P}_{k+1}(b)$
with $b_u\leq\iota_{\max}(u)$ maximizes the terms defining $z_0(u,b)$ and $z_1(u,b)$.

Since $z_0(u,b)>z_1(u,b)$,
we obtain
$\delta_1(b_u,b_1,\ldots,b_k)=0$
and 
$\delta_0(b_u,b_1,\ldots,b_k)=1$,
which,
by the definition of $\delta_j$,
implies that there are exactly $\tau(u)+b_u-1$ indices $i$ in $[k]$
with $x_0(v_i,b_i)=x_1(v_i,b_i)$.
By symmetry, we may assume that 
$x_0(v_i,b_i)=x_1(v_i,b_i)$ for $i\in [\tau(u)+b_u-1]$
and 
$x_0(v_i,b_i)>x_1(v_i,b_i)$ for $i\in [k]\setminus 
[\tau(u)+b_u-1]$.

Let $\iota=\iota_u+\sum\limits_{i=1}^k\iota_0(v_i,b_i)$,
where 
$\iota_0(v_i,b_i)(u)$ is set to $0$ for every $i\in [k]$
and $\iota_u$ is as in~(\ref{eiotau}).
Note that, by assumption, we have $\iota=\iota_u+\sum\limits_{i=1}^k\iota_1(v_i,b_i)$.
Let $D$ be a minimum dynamic monopoly of $(T_u,\tau+\iota)$.
By the definition of $x_0(u,b)$, we have $|D|\leq x_0(u,b)$.
Let $D_i=D\cap V_{v_i}$ for every $i\in [k]$.
Since $D_i$ is a dynamic monopoly of $\left(T_{v_i},(\tau+\iota)^{\to v_i}\right)$ for every $i\in [k]$, we obtain $|D_i|\geq x_1(v_i,b_i)$.
Note that 
\begin{itemize}
\item either $u\in D$,
\item or $u\not\in D$ and there is some index $\ell\in [k]\setminus [\tau(u)+b_u-1]$ such that $D_\ell=D\cap V_{v_\ell}$ is a dynamic monopoly of $(T_{v_\ell},\tau+\iota)$.
\end{itemize} 
In the first case,
we obtain 
$$z_0(u,b)=x_0(u,b)\geq |D|
=1+\sum\limits_{i=1}^k|D_i|
\geq 1+\sum\limits_{i=1}^kx_1(v_i,b_i)
=z_0(u,b),$$
and, in the second case, 
we obtain $|D_\ell|\geq x_0(v_\ell,b_\ell)\geq x_1(v_\ell,b_\ell)+1$, and, hence,
$$z_0(u,b)=x_0(u,b)\geq |D|
=|D_\ell|+\sum\limits_{i\in [k]\setminus \{ \ell\}}|D_i|
\geq 1+\sum\limits_{i=1}^kx_1(v_i,b_i)
=z_0(u,b).$$
In both cases we obtain $|D|=x_0(u,b)$,
which implies that $\iota_0(u,b)$ may be chosen equal to $\iota$.

Now, let $D^-$ be a minimum dynamic monopoly of $\left(T_u,(\tau+\iota)^{\to u}\right)$.
By the definition of $x_1(u,b)$, we have $|D^-|\leq x_1(u,b)$.
Let $D^-_i=D^-\cap V_{v_i}$ for every $i\in [k]$.
Since $D^-_i$ is a dynamic monopoly of $\left(T_{v_i},(\tau+\iota)^{\to v_i}\right)$ for every $i\in [k]$, we obtain $|D^-_i|\geq x_1(v_i,b_i)$.
Now,
$$z_1(u,b)=x_1(u,b)
\geq |D^-|
\geq \sum\limits_{i=1}^kx_1(v_i,b_i)
=z_1(u,b),$$
which implies that $|D^-|=x_1(u,b)$,
and that $\iota_1(u,b)$ may be chosen equal to $\iota$.
Altogether, 
the two functions $\iota_0(u,b)$ and $\iota_1(u,b)$ may be chosen equal, 
which implies (\ref{e3}).
\end{proof}
Applying induction 
using Lemma \ref{lemma1} and Lemma \ref{lemma2}, 
we obtain the following.

\begin{corollary}\label{corollary1}
$\iota_0(u,b)=\iota_1(u,b)$ for every vertex $u$ of $T$, 
and every integer $b$ with $0\leq b\leq \iota_{\max}(V_u)$.
\end{corollary}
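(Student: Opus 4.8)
The plan is to prove Corollary \ref{corollary1} by structural induction on the rooted tree $T$, working from the leaves toward the root $r$. The statement to be established is that for every vertex $u$ and every integer $b$ with $0\le b\le \iota_{\max}(V_u)$, one may take $\iota_0(u,b)=\iota_1(u,b)$. The two preceding lemmas have been set up precisely to drive this induction: Lemma \ref{lemma1} is the base case, and Lemma \ref{lemma2} is the inductive step, so the proof of the corollary is essentially bookkeeping that assembles them.

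First I would fix the induction order: process the vertices of $T$ in any order of non-increasing depth (equivalently, postorder), so that when a vertex $u$ is treated, all of its children have already been treated. For the base case, let $u$ be a leaf. Then $V_u=\{u\}$ and $\iota_{\max}(V_u)=\iota_{\max}(u)$, and Lemma \ref{lemma1} gives $\iota_0(u,b)(u)=\iota_1(u,b)(u)=b$ for every $b$ with $0\le b\le \iota_{\max}(u)$; since these functions are supported on the single vertex $u$, they coincide, which is exactly the claim for $u$.

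For the inductive step, let $u$ be a non-leaf vertex with children $v_1,\dots,v_k$. By the induction hypothesis applied to each $v_i$, we have $\iota_0(v_i,b_i)=\iota_1(v_i,b_i)$ for every $i\in[k]$ and every integer $b_i$ with $0\le b_i\le \iota_{\max}(V_{v_i})$; note that $\iota_{\max}(V_{v_i})$ is the correct range since $V_{v_i}$ is the vertex set of the subtree rooted at $v_i$. This is exactly the hypothesis required by Lemma \ref{lemma2}, so its conclusion applies: for every non-negative integer $b$ we get $x_j(u,b)=z_j(u,b)$ for $j\in\{0,1\}$, and in particular, when $0\le b\le \iota_{\max}(V_u)$, relation (\ref{e3}) yields $\iota_0(u,b)=\iota_1(u,b)$. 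That is precisely the claim for $u$, completing the induction and hence the proof.

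There is essentially no obstacle here: all of the genuine work — the two claims in Lemma \ref{lemma2} establishing the recursion $x_j(u,b)=z_j(u,b)$, and the delicate argument deriving (\ref{e3}) from the assumption that the children already satisfy the equality of the two increment functions — has been carried out in the lemmas. The only point that needs a moment's care is making sure the ranges of the budget parameters line up between the corollary, Lemma \ref{lemma1}, and Lemma \ref{lemma2}: Lemma \ref{lemma2} is stated for an arbitrary non-negative integer $b$ (and handles $b>\iota_{\max}(V_u)$ by the $-\infty$ convention), while (\ref{e3}) and the corollary restrict to $b\le\iota_{\max}(V_u)$, so the restriction is harmless. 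Given all this, the proof of Corollary \ref{corollary1} is a one-line induction, which is why the authors state it as an immediate consequence.
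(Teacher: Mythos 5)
Your proposal is correct and matches the paper's argument, which obtains the corollary precisely by induction over the tree using Lemma \ref{lemma1} for leaves and relation (\ref{e3}) of Lemma \ref{lemma2} for internal vertices. Nothing further is needed.
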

Apart from the specific values of $x_0(u,b)$ and $x_1(u,b)$,
the arguments in the proof of Lemma~\ref{lemma2} 
also yield feasible recursive choices for $\iota_0(u,b)$.
In fact, 
if 
$$x_0(u,b)
=\delta_0(b_u,b_1,\ldots,b_k)+\sum\limits_{i=1}^kx_1(v_i,b_i)>-\infty$$
for $(b_u,b_1,\ldots,b_k)\in {\cal P}_{k+1}(b)$ with $b_u\leq \iota_{\max}(u)$, 
and $\iota_u$ is as in (\ref{eiotau}),
then 
$\iota_u+\sum\limits_{i=1}^k\iota_0(v_i,b_i)$
is a feasible choice for $\iota_0(u,b)$.

Our next lemma explains how to efficiently 
compute the expressions in Lemma \ref{lemma2}.

\begin{lemma}\label{lemma3}
Let $u$ be a vertex of $T$ that is not a leaf, 
let $b$ be an integer with $0\leq b\leq \iota_{\max}(V_u)$,
and let $v_1,\ldots,v_k$ be the children of $u$.
If the values $x_1(v_i,b_i)$ are given for every $i\in [k]$ 
and every integer $b_i$ with $0\leq b_i\leq \iota_{\max}(V_{v_i})$,
then $x_0(u,b)$ and $x_1(u,b)$ can be computed in time $O\left(k^2(b+1)^2\right)$.
\end{lemma}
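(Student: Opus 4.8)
The plan is to turn the recursive expression $x_j(u,b)=z_j(u,b)$ from Lemma~\ref{lemma2} into a single dynamic program over the children $v_1,\dots,v_k$ of $u$. That expression maximizes $\delta_j(b_u,b_1,\dots,b_k)+\sum_{i=1}^kx_1(v_i,b_i)$ over the ordered partitions $(b_u,b_1,\dots,b_k)\in{\cal P}_{k+1}(b)$ with $b_u\le\iota_{\max}(u)$, and the term $\delta_j\in\{0,1\}$ depends only on $b_u$ and on the number of \emph{good} pairs among $(v_1,b_1),\dots,(v_k,b_k)$, where $(v_i,b_i)$ is called good if $x_0(v_i,b_i)=x_1(v_i,b_i)$. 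We may assume that, alongside each value $x_1(v_i,b_i)$, we are also given the bit indicating whether $(v_i,b_i)$ is good --- equivalently the value $x_0(v_i,b_i)$, which by Lemma~\ref{lemma0} and the inequality $z_1\le z_0\le z_1+1$ used in the proof of Lemma~\ref{lemma2} is either $x_1(v_i,b_i)$ or $x_1(v_i,b_i)+1$ --- since this bit is produced by the very recursion that yields $x_1(v_i,b_i)$. (For $b_i>\iota_{\max}(V_{v_i})$ we have $x_1(v_i,b_i)=-\infty$, so such pairs never enter an optimal partition and their goodness is irrelevant.)

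First I would decouple $b_u$ from the children. Fix an admissible value $b_u\in\{0,\dots,\min\{b,\iota_{\max}(u)\}\}$, put $\beta=b-b_u$ and $t=\tau(u)+b_u-j$, and let $S(\beta,g)$ denote the maximum of $\sum_{i=1}^kx_1(v_i,b_i)$ over $(b_1,\dots,b_k)\in{\cal P}_k(\beta)$ for which \emph{exactly} $g$ of the pairs $(v_i,b_i)$ are good. A partition counted in $S(\beta,g)$ contributes $\sum x_1(v_i,b_i)$ when $g\ge t$ (then $\delta_j=0$) and $1+\sum x_1(v_i,b_i)$ when $g<t$ (then $\delta_j=1$); these two cases are exhaustive and mutually exclusive, so the contribution of $b_u$ to $z_j(u,b)$ is
\[
\max\Big(\,\max_{g\ge t}S(\beta,g)\,,\ 1+\max_{g<t}S(\beta,g)\,\Big),
\]
where both inner maxima run over $g\in\{0,\dots,k\}$ with the convention $\max\emptyset=-\infty$; this handles the boundary cases $t>k$ (then $\delta_j\equiv1$) and $t\le0$ (then $\delta_j\equiv0$) correctly. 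Maximizing this quantity over all admissible $b_u$ gives $z_j(u,b)=x_j(u,b)$ by Lemma~\ref{lemma2}.

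It remains to compute the table $S=C_k$, which I would do by a left-to-right dynamic program adding one child at a time. Set $C_0(\beta,g)=0$ if $\beta=g=0$ and $C_0(\beta,g)=-\infty$ otherwise, and for $m\in[k]$ let
\[
C_m(\beta,g)=\max_{0\le b_m\le\beta}\Big(C_{m-1}\big(\beta-b_m,\,g-[(v_m,b_m)\text{ is good}]\big)+x_1(v_m,b_m)\Big),
\]
so that $C_m(\beta,g)$ equals the maximum of $\sum_{i=1}^mx_1(v_i,b_i)$ over $(b_1,\dots,b_m)\in{\cal P}_m(\beta)$ with exactly $g$ good pairs; the constraint $b_i\le\iota_{\max}(V_{v_i})$ is enforced automatically through the $-\infty$ entries. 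For each $m$ the states have $\beta\in\{0,\dots,b\}$ and $g\in\{0,\dots,m\}$, so $O(k(b+1))$ of them, and each is obtained by a maximum over the $O(b+1)$ choices of $b_m$; over the $k$ children this costs $O(k^2(b+1)^2)$. Finally, precomputing for each $\beta$ the prefix and suffix maxima of $C_k(\beta,\cdot)$ over $g$ lets the displayed quantity be evaluated in constant time for each of the $O(b+1)$ admissible $b_u$ and each $j\in\{0,1\}$, so the combining phase costs only $O(k(b+1))$. The total running time is therefore $O(k^2(b+1)^2)$, and standard backtracking through the $C_m$ tables additionally recovers, within the same bound, an optimal partition $(b_u,b_1,\dots,b_k)$ --- which is exactly what is needed to assemble the increments as in the remark preceding the lemma.

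The step I expect to be the crux is the decoupling in the second paragraph: although $\delta_j$ couples $b_u$ with the \emph{global} count of good children, once $b_u$ (hence $t$) is fixed the optimum splits cleanly into an ``at least $t$ good, pay nothing'' branch and an ``at most $t-1$ good, pay one'' branch. Recognizing this is what makes a single three-dimensional table (budget spent, number of good children) suffice, computed once in $O(k^2(b+1)^2)$ time, rather than re-running a two-dimensional knapsack for every threshold $t$, which would inflate the running time. Everything else --- that $C_k$ indeed realizes $S$, that the $-\infty$ and boundary conventions match the definition of $\delta_j$, and that all feasibility constraints are respected --- is routine induction and bookkeeping.
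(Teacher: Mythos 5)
Your proposal is correct and follows essentially the same approach as the paper: a knapsack-style dynamic program over the children that tracks the budget spent and the number of children with $x_0(v_i,b_i)=x_1(v_i,b_i)$, combined at the end over the choice of $b_u$ via the threshold $\tau(u)+b_u-j$ coming from $\delta_j$. Your bookkeeping is in fact slightly cleaner than the paper's (one table indexed by the residual budget $\beta=b-b_u$ rather than a table carrying $b_u$ as an extra index, plus prefix/suffix maxima for the combining step), but this is a refinement of the same argument, not a different route.
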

\begin{proof}
By symmetry, it suffices to explain how to compute $z_0(u,b)$.

For $p\in \{0\}\cup [k]$, 
an integer $p_=$, 
an integer $b'\in \{0\}\cup [b]$,
and $b_u\in\{0\}\cup[\min\{\iota_{\max}(u), b'\}]$, 
let $M(p,p_=,b',b_u)$ be defined as the maximum of the expression
$\sum\limits_{i=1}^px_1(v_i,b_i)$
over all 
$(b_1,\ldots,b_p)\in {\cal P}_{p}(b'-b_u)$
such that $p_=$ equals $\Big|\Big\{ i\in [p]:x_0(v_i,b_i)=x_1(v_i,b_i)\Big\}\Big|.$
Clearly, $M(p,p_=,b',b_u)=-\infty$ if 
$p<p_=$ 
or $p_=<0$ 
or $b'-b_u>\sum\limits_{i=1}^p \iota_{\max}(V_{v_i})$, 
and 
\begin{eqnarray*}
M(0,0,b',b_u)=
\begin{cases}
0 &\mbox{, if $b'=b_u$, and}\\
-\infty &\mbox{, otherwise.}
\end{cases}	
\end{eqnarray*}
For $p\in [k]$, the value of $M(p,p_=,b',b_u)$ 
is the maximum of the following two values:
\begin{itemize}
	\item The maximum of 
	$M(p-1,p_=-1,b_{\leq p-1},b_u)+x_1(v_p,b_p)$
	over all $(b_{\leq p-1},b_p)\in {\cal P}_2(b'-b_u)$
	with $x_0(v_p,b_p)=x_1(v_p,b_p)$, and
	\item the maximum of 
	$M(p-1,p_=,b_{\leq p-1},b_u)+x_1(v_p,b_p)$
	over all $(b_{\leq p-1},b_p)\in {\cal P}_2(b'-b_u)$
	with $x_0(v_p,b_p)>x_1(v_p,b_p)$,
\end{itemize}
which implies that $M(p,p_=,b',b_u)$ can be determined in $O(b'+1)$ time
given the values 
$$\mbox{$M(p-1,p_=,b_{\leq p-1},b_u)$, $M(p-1,p_=-1,b_{\leq p-1},b_u)$, $x_0(v_p,b_p)$, and $x_1(v_p,b_p)$.}$$
Altogether, the values $M(k,p_=,b,b_u)$ 
for all $p_=\in \{ 0\}\cup [k]$ can be determined in time $O\left(k^2(b+1)\right)$.

For $b_u\in\{0\}\cup[\min\{\iota_{\max}(u), b\}]$, let $m(b_u)$ be the maximum of the two expressions
$$1+\max\Big\{ M(k,p_=,b,b_u):p_=\in \{ 0\}\cup [\tau(u)-b_u-1]\Big\}$$
and
$$\max\Big\{ M(k,p_=,b,b_u):p_=\in [k]\setminus [\tau(u)-b_u-1]\Big\}.$$
Now, by the definition of $\delta_0(b_u,b_1,\ldots,b_k)$,
the value of $z_0(u,b)$ equals
$\max\Big\{ m(b_u) : b_u\in\{0\}\cup[\min\{\iota_{\max}(u), b\}]
\Big\}.$
Hence, $z_0(u,b)$ can be computed in time $O\left(k^2(b+1)^2\right)$.
\end{proof}
We proceed to the proof of our first theorem.

\begin{proof}[Proof of Theorem \ref{theorem1}]
Given $(T,\tau,\iota_{\max},b)$,
Lemma \ref{lemma1} to Lemma \ref{lemma3} imply
that the values of $x_0(u,b')$ and of $x_1(u,b')$ 
for all $u\in V(T)$ and all $b'\in \{ 0\}\cup [b]$
can be determined in time
$$O\left(\sum\limits_{u\in V(T)}d_T(u)^2(b+1)^2\right).$$
It is a simple folklore exercise that $\sum\limits_{u\in V(T)}d_T(u)^2\leq n^2-n$
for every tree $T$ of order $n$, 
which implies the statement about the running time.
Since ${\rm vacc}(T,\tau,\iota_{\max},b)=x_0(r,b)$, 
the statement about the value of ${\rm vacc}(T,\tau,\iota_{\max},b)$ follows.
The statement about the increment $\iota$
follows easily from the remark after Corollary \ref{corollary1} 
concerning the function $\iota_0(u,b)$, and the proof of Lemma~\ref{lemma3},
where, next to the values $M(p,p_=,b',b_u)$, 
one may also memorize suitable increments.
\end{proof}
We conclude with the proof of our second theorem.

\begin{proof}[Proof of Theorem \ref{theorem2}]
Let $G$ be an $r$-regular graph of order $n$,
and let $b$ be an integer with $(2r-1)(r+1)\leq b\leq rn=2m(G)$.

Let 
$\iota\in \mathbb{Z}^{V(G)}$
with $0\leq \iota\leq d_G$
and $\iota(V(G))=b$ be such that 
${\rm vacc}(G,0,d_G,b)={\rm dyn}(G,\iota)$.
By a result of Ackerman et al. \cite{acbewo},
$${\rm vacc}(G,0,d_G,b)={\rm dyn}(G,\iota)
\leq \sum\limits_{u\in V(G)}\frac{\iota(u)}{d_G(u)+1}
=\frac{\iota(V(G))}{r+1}=\frac{b}{r+1}.$$
First, suppose that the matching number $\nu$ of $G$
satisfies $2r\nu>b$.
In this case, 
$G$ has a matching $M$ with 
$\tau_M(V(G))=2r|M|\leq b$
and $2r(|M|+1)\geq b+1$,
where $\tau_M$ is as in the statement.
We obtain
$2{\rm dyn}(G,\tau_M)\geq 2|M|
\geq 2\left(\frac{b+1}{2r}-1\right)
\geq \frac{b}{r+1}
\geq {\rm vacc}(G,0,d_G,b)$.
Next, suppose that $2r\nu\leq b$.
If $M$ is a maximum matching and $D$ is a minimum vertex cover,
then $|D|\leq 2|M|$.
Since $D$ is a dynamic monopoly of $(G,d_G)$,
we obtain 
$2{\rm dyn}(G,\tau_M)
\geq 2|M|
\geq |D|
\geq {\rm dyn}(G,d_G)
\geq {\rm vacc}(G,0,d_G,b)$,
that is,
$2{\rm dyn}(G,\tau_M)\geq {\rm vacc}(G,0,d_G,b)$
holds in both cases.
\end{proof}

\end{document}